\title{Fixed Points of Augmented Generalized Happy Functions} 
\author{
Breeanne Baker Swart \and
Kristen A. Beck \and
Susan Crook \and
Christina Eubanks-Turner \and 
Helen G. Grundman \and
May Mei \and
Laurie Zack 
\thanks{Work on the project was partially supported by the National Science Foundation grant \#DMS 1239280.}}
\date{\today}
\DeclarePairedDelimiter{\floor}{\lfloor}{\rfloor}
\newcommand{\ZZ}{\mathbb{Z}}
\newcommand{\ds}{\displaystyle}
\newcommand{\Scb}{\ensuremath{S_{[c,b]}}}
\theoremstyle{plain}
\newtheorem{theorem}{Theorem}[section]
\newtheorem{lemma}[theorem]{Lemma}
\newtheorem{corollary}[theorem]{Corollary}
\theoremstyle{remark}
\newtheorem{definition}{Definition}
\begin{document}

\maketitle

\begin{abstract}
An augmented generalized happy function $\Scb$ maps a positive integer to the sum of the squares of its base $b$ digits plus $c$. In this paper, we study various properties of the fixed points of $\Scb$; count the number of fixed points of $\Scb$, for $b \geq 2$ and $0<c<3b-3$; and  prove that, for each $b \geq 2$, there exist arbitrarily many consecutive values of $c$ for which $\Scb$ has no fixed point.
\end{abstract}

\section{Introduction}\label{S:Introduction}

The concept of a happy number, defined in~\cite{honsberger} and popularized by~\cite{guy}, was generalized in~\cite{genhappy} by allowing for varying bases and exponents in the defining function.  In~\cite{augment}, this was generalized further, altering the defining function with the addition of a constant.  Specifically,
for integers $c \geq 0$ and $b \geq 2$, the augmented generalized happy function, $\Scb:{\ZZ}^+ \rightarrow {\ZZ}^+$, is defined by
\begin{equation}\label{eq:definition}
\Scb\left(\sum_{i=0}^n a_i b^i \right) =
c + \sum_{i=0}^n a_i^2,
\end{equation}
where $0\leq a_i \leq b-1$ and $a_n \neq 0$. 
Thus, for a positive integer $a$ denoted $a_n\cdots a_1 a_0$ in base $b$, 
\[
\Scb(a_n\ldots a_1 a_0) = c + a_n^2 + \ldots + a_1^2 + a_0^2.
\]

A positive integer $a$ is a happy number if for some $k\in \ZZ^+$, $S_{[0,10]}^k(a) = 1$.  Although $1$ is the sole fixed point of $S_{[0,10]}$, as shown in~\cite{genhappy}, for $b \neq 10$, $S_{[0,b]}$ may have additional fixed points.  Similarly, as shown in~\cite{augment}, when $c > 0$ (and $b \geq 2$), $S_{[c,b]}$ may have multiple fixed points. 

In this work, we study the fixed points of the functions $\Scb$. First, in Section~\ref{S:bFixedPoints}, we prove some preliminary results providing properties of the fixed points and consecutive fixed points of an arbitrary, but fixed, $\Scb$. Then, in Section~\ref{S:CountingFixedPoints}, we discuss the exact number of fixed points of $\Scb$, in terms of $c$ and $b$. Finally, in Section~\ref{S:ArbitraryLengthDesert}, we let $c \geq 0$ vary and prove that for each $b \geq 2$, there are arbitrarily long sequences of consecutive values of $c$ for which $\Scb$ has no fixed point.

\section{Fixed Point Characteristics} \label{S:bFixedPoints}

Here we discuss a variety of results concerning relationships between the fixed points of a single $\Scb$, where $c \geq 0$ and $b \geq 2$ are arbitrary integers. 
The first theorem concerns consecutive fixed points of $\Scb$.

\begin{theorem}\label{t:btwin}
Fix $c \geq 0$ and $b \geq 2$.  
\begin{enumerate}
\item\label{pairiff}
If $a\in \ZZ^+$ is a multiple of $b$, then $a$ is a fixed point of $\Scb$ if and only if $a + 1$ is a fixed point of $\Scb$.
\item\label{pairs}
Every consecutive pair of fixed points of $\Scb$ has a multiple of $b$ as its first member.
\item\label{notriplet}
There is no consecutive triplet of fixed points of $\Scb$.
\end{enumerate}
\end{theorem}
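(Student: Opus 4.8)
The plan is to establish the three parts in sequence, with parts~\ref{pairs} and \ref{notriplet} leaning on what precedes. Throughout, write $a = a_n\cdots a_1 a_0$ in base $b$, and recall that $\Scb$ depends only on the multiset of base-$b$ digits of its input. For part~\ref{pairiff}, suppose $b \mid a$, so $a_0 = 0$. Since $b \geq 2$, adding $1$ to $a$ causes no carry, and the base-$b$ representation of $a+1$ is $a_n\cdots a_1 1$; hence $\Scb(a+1) = \Scb(a) + (1^2 - 0^2) = \Scb(a) + 1$. Thus $\Scb(a) = a$ implies $\Scb(a+1) = a+1$, and $\Scb(a+1) = a+1$ implies $\Scb(a) = \Scb(a+1) - 1 = a$, which is the asserted equivalence.

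For part~\ref{pairs}, suppose $a$ and $a+1$ are both fixed points, so $\Scb(a+1) - \Scb(a) = 1$, and assume toward a contradiction that $a_0 \geq 1$. If $a_0 \leq b-2$, then, as in part~\ref{pairiff}, $\Scb(a+1) - \Scb(a) = (a_0+1)^2 - a_0^2 = 2a_0 + 1 \geq 3$, a contradiction. Otherwise $a$ ends in a block of exactly $k \geq 1$ digits equal to $b-1$; adding $1$ replaces those digits by $0$'s and either (i) increments a preceding digit $a_k \leq b-2$, or (ii), when every digit of $a$ equals $b-1$, creates a new leading digit $1$ above $m \geq 1$ zeros. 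In case (i), $\Scb(a+1) - \Scb(a) = (2a_k + 1) - k(b-1)^2 \leq 0$, since $2a_k + 1 \leq 2(b-2)+1 \leq (b-1)^2 \leq k(b-1)^2$, the middle inequality being $(b-1)^2 - (2b-3) = (b-2)^2 \geq 0$. In case (ii), the difference is $1 - m(b-1)^2 \leq 0$. Both possibilities contradict $\Scb(a+1) - \Scb(a) = 1$, so $a_0 = 0$; that is, $b \mid a$.

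Part~\ref{notriplet} then follows immediately: three consecutive fixed points $a$, $a+1$, $a+2$ would, by part~\ref{pairs} applied to the consecutive pairs $\{a,a+1\}$ and $\{a+1,a+2\}$, force both $b \mid a$ and $b \mid a+1$, hence $b \mid 1$, contradicting $b \geq 2$. All of the computations are elementary; the only step that needs a little care is the carry bookkeeping in part~\ref{pairs} — correctly recording how the digit multiset changes when $a_0 = b-1$, isolating the boundary case in which $a$ is a string of $(b-1)$'s, and checking $2(b-2)+1 \leq (b-1)^2$ for all $b \geq 2$.
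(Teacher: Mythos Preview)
Your proof is correct and follows essentially the same approach as the paper's: the same direct computation for part~\ref{pairiff}, the same case split on whether $a_0=b-1$ for part~\ref{pairs} (with the carry bookkeeping handled by bounding $\Scb(a+1)-\Scb(a)\le 0$ rather than solving $j(b-1)^2=2a_j$, which is the same computation in inequality form), and the same immediate deduction for part~\ref{notriplet}.
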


\begin{proof}

We begin with the proof of Part~\ref{pairiff}.  
Since $a$ is a multiple of $b$, we have 
$\Scb(a + 1) = \Scb(a) + 1^2 = \Scb(a) + 1$.  Thus $\Scb(a) = a$ if and only if $\Scb(a + 1) = a + 1$.
 
For Part~\ref{pairs}, assume that $a$ and $a + 1$ are both fixed points of $\Scb$ and 
using standard notation for base $b$, let
\[a = \sum_{i=0}^n a_i b^i.\]   

First, assume that $a_0 \neq b-1$.  Then 
\begin{align*}
a & = \Scb(a) = c + \sum_{i=1}^n a_i^2 + a_0^2,  \quad \textnormal{and so}\\
a + 1 & = \Scb(a + 1) = c + \sum_{i=1}^n a_i^2 +(a_0 + 1)^2 \\
&= a + 2a_0 + 1.
\end{align*}
Thus, $2a_0 = 0$, implying that $a_0 = 0$. Therefore $a$ is a multiple of $b$, as desired.

Next assume, for a contradiction, that $a_0 = b - 1$.  Let $j \in \ZZ^+$ be minimal such that $a_j \neq b - 1$. (If every digit of $a$ is equal to $b-1$, then $a+1=b^k$ for some natural number $k$ and $\Scb(a+1)=1$, so $a+1$ is not a fixed point.)
Then
\begin{equation}\label{e:a}
a = \Scb(a) = c + \sum_{i=j}^n a_i^2 + j(b-1)^2,
\end{equation}
and since
\begin{align}
a + 1 & =  \sum_{i=j+1}^n b^i a_i + (a_j+1)b^j, \quad \textnormal{ we have }
\nonumber \\
a + 1 & = \Scb(a + 1) = c + \sum_{i=j+1}^n a_i^2 + (a_j+1)^2.
\label{e:a+1}
\end{align}
Combining equations~(\ref{e:a}) and~(\ref{e:a+1}) yields
\[a_j^2 + j(b - 1)^2 +1 = (a_j + 1)^2.\]
Thus, $ j(b-1)^2 = 2a_j$.  Since $a_j < b - 1$, $j(b - 1) < 2$ and so 
$j = b - 1 = 1$.  But then $2a_j = 1$, which is a contradiction.

Finally, Part~\ref{notriplet} is immediate from Part~\ref{pairs}.
\end{proof}

Lemma~\ref{l:reflections} provides another pairing of fixed points of $\Scb$.

\begin{lemma}\label{l:reflections}
Fix $c \geq 0$, $b \geq 2$, and $a\in \ZZ^+$ where
\[a = \sum_{i=0}^n a_i b^i,\] in standard base $b$ notation with $a_1 \neq 0$.
Let
\[\tilde{a} = \sum_{i=2}^n {a}_i b^i + (b - a_1)b + a_0.\]  
Then $a$ is a fixed point of $\Scb$ if and only if $\tilde{a}$ is a fixed point of $\Scb$.
\end{lemma}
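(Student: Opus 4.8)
The plan is to recognize $\tilde a$ as the integer obtained from $a$ by a single-digit \emph{reflection}: its base-$b$ digit in the $b^1$ place is changed from $a_1$ to $b-a_1$, while every other digit is left untouched. First I would check that this is legitimate. Since $a_1\neq 0$ and $a_1\leq b-1$, we have $1\leq b-a_1\leq b-1$, so $b-a_1$ is a valid (indeed nonzero) base-$b$ digit; consequently
\[
\tilde a=\sum_{i=2}^n a_i b^i+(b-a_1)b+a_0
\]
is the standard base-$b$ expansion of a positive integer: its leading digit is $a_n\neq 0$ when $n\geq 2$, and is $b-a_1\neq 0$ when $n=1$. In particular, $\Scb(\tilde a)$ is computed by squaring exactly these digits.

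The key computation is to compare $\tilde a$ and $\Scb(\tilde a)$ against $a$ and $\Scb(a)$. Subtracting the base-$b$ expansions of $a$ and $\tilde a$, only the $b^1$ terms survive, giving
\[
\tilde a-a=\bigl((b-a_1)-a_1\bigr)b=b(b-2a_1).
\]
On the other hand, $\Scb(a)=c+\sum_{i=0}^n a_i^2$ and $\Scb(\tilde a)=c+\sum_{i=2}^n a_i^2+(b-a_1)^2+a_0^2$ differ only in the contribution of the reflected digit, so, by difference of squares,
\[
\Scb(\tilde a)-\Scb(a)=(b-a_1)^2-a_1^2=b(b-2a_1).
\]

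Comparing the two displays yields $\tilde a-a=\Scb(\tilde a)-\Scb(a)$, equivalently $\Scb(a)-a=\Scb(\tilde a)-\tilde a$. Hence $\Scb(a)-a=0$ if and only if $\Scb(\tilde a)-\tilde a=0$, which is precisely the assertion that $a$ is a fixed point of $\Scb$ if and only if $\tilde a$ is. I do not anticipate a genuine obstacle here: the whole content is the observation that reflecting a digit $d\mapsto b-d$ in the $b^1$ place alters the integer and its image under $\Scb$ by the same amount. The only point that requires a little care is verifying that $b-a_1$ is an honest base-$b$ digit, which is exactly where the hypothesis $a_1\neq 0$ enters and what makes the displayed formula for $\Scb(\tilde a)$ valid.
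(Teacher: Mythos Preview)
Your proof is correct and follows essentially the same approach as the paper: both hinge on the identity $(b-a_1)^2-a_1^2=b(b-2a_1)=\tilde a-a$. Your presentation is marginally cleaner in that it establishes $\Scb(a)-a=\Scb(\tilde a)-\tilde a$ directly, yielding both implications at once, whereas the paper proves one direction and invokes symmetry for the other; you also make explicit the check that $b-a_1$ is a valid nonzero digit, which the paper leaves implicit.
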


\begin{proof}
Assume that $a$ and $\tilde{a}$ are as above, and that $a$ is a fixed point of $\Scb$. Then 
\begin{align*} 
\Scb(\tilde{a}) &= \Scb \left(\sum_{i=2}^n a_i b^i + (b-a_1)b + a_0 \right) = c + \sum_{i=2}^n a_i^2 + (b-a_1)^2 + a_0^2 \\
&= c + \sum_{i=0}^n a_i^2 + b^2 - 2a_1 b
= \Scb(a)  + b^2 - 2a_1b 
= a + b^2 - 2a_1b \\ &= \sum_{i = 0}^na_ib^i + (b - 2a_1)b  
= \sum_{i=2}^na_ib^i +(b - a_1)b + a_0
= \tilde{a}.
\end{align*}
Therefore $\tilde{a}$ is also a fixed point of $\Scb$. The converse is immediate by symmetry.
\end{proof}

Finally, we consider the parity of $c$ that is required for $\Scb$ to have a fixed point.

\begin{lemma}\label{L:cbparity}
Fix $c \geq 0$ and $b \geq 2$, and let 
$a = \sum_{i=0}^n a_i b^i$ be a fixed point of $\Scb$, in the usual base $b$ notation.
\begin{enumerate}
\item
If $b$ is odd, then $c$ is even.
\item
If $b$ is even, then $c \equiv \ds \sum_{i=1}^n a_i \pmod 2$.
\end{enumerate}
\end{lemma}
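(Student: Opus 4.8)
The plan is to start from the defining equation for a fixed point and reduce everything modulo $2$. Since $a$ is a fixed point, $a = \Scb(a) = c + \sum_{i=0}^n a_i^2$, so
\[
c = \sum_{i=0}^n a_i b^i - \sum_{i=0}^n a_i^2 = \sum_{i=0}^n a_i\left(b^i - a_i\right).
\]
The whole argument then rests on the elementary congruence $a_i^2 \equiv a_i \pmod 2$, which lets us replace each $a_i^2$ by $a_i$ when working mod $2$, and on tracking the parity of $b^i$.

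For Part~1, if $b$ is odd then $b^i$ is odd for every $i \geq 0$, so $a_i b^i \equiv a_i \pmod 2$; combined with $a_i^2 \equiv a_i \pmod 2$, each summand $a_i(b^i - a_i) \equiv a_i - a_i \equiv 0 \pmod 2$, and hence $c$ is even. For Part~2, if $b$ is even then $b^i$ is even for $i \geq 1$ while $b^0 = 1$; thus for $i \geq 1$ we get $a_i b^i - a_i^2 \equiv 0 - a_i \equiv a_i \pmod 2$, whereas the $i=0$ term is $a_0 - a_0^2 \equiv a_0 - a_0 \equiv 0 \pmod 2$. Summing gives $c \equiv \sum_{i=1}^n a_i \pmod 2$.

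There is no real obstacle here: the only point requiring a moment of care is that the $i = 0$ term behaves the same way in both cases (because $b^0 = 1$ is odd regardless of the parity of $b$), so in the even-$b$ case the constant digit $a_0$ correctly drops out of the sum and the congruence runs over $i$ from $1$ to $n$. I would present the two cases in sequence as above, after first recording the rewriting of $c$ and the congruence $a_i^2 \equiv a_i \pmod 2$.
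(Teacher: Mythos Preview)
Your proof is correct and rests on the same key observation as the paper's, namely $a_i^2 \equiv a_i \pmod 2$. The organization differs slightly: the paper handles Part~1 by invoking an external lemma (that $\Scb(a) \equiv c + a \pmod 2$ when $b$ is odd) and handles Part~2 by reducing $a \equiv a_0 \pmod 2$ before subtracting, whereas you work uniformly from the explicit expression $c = \sum_i a_i(b^i - a_i)$ and track the parity of $b^i$ in each case. Your version is self-contained and arguably cleaner, since it treats both parts with the same mechanism and does not appeal to a result outside the paper.
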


\begin{proof}
For $b$ odd, by~\cite[Lemma 2.3]{augment},   
$\Scb(a) \equiv c + a \pmod 2$, which implies that $c$ is even.
For $b$ even, since $a$ is a fixed point of $\Scb$,
\[a_0 \equiv a = \Scb(a) = c + \sum_{i=0}^n a_i^2 \\
\equiv c + \sum_{i=0}^n a_i \pmod 2.\]
Subtracting $a_0$ from both sides of the congruence yields the result.
\end{proof}

\section{Counting the Number of Fixed Points}\label{S:CountingFixedPoints}

In this section, we consider the {\em number} of fixed points of the function $\Scb$ for fixed $c \geq 0$ and $b \geq 2$. In Corollary~\ref{t:counting}, we provide a formula for the number of fixed points of $\Scb$ for all values of $b$ and a range of values of $c$, depending on $b$.  

We begin by determining the number of fixed points of $\Scb$ of the form $ub^n$, where $0 < u < b$ and $n \geq 0$.  To fix notation, for $c \geq 0$, $b \geq 2$, and $n \geq 0$, let 
\[\mathcal F_{[c,b]}^{(n)} =
\{a = ub^n | 0 < u < b \textnormal{~and~}\Scb(a) = a\}.
\]

In the following three lemmas, we provide conditions under which $\mathcal F_{[c,b]}^{(n)}$ assumes specified values.

\begin{lemma}\label{l:onedigit}
Fix $b\geq 2$.
For $c > 0$, $\mathcal F_{[c,b]}^{(0)}$ is empty, while $\mathcal F_{[0,b]}^{(0)} = \{1\}$.
\end{lemma}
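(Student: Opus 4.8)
The plan is to unwind the definitions and reduce everything to a single elementary equation. An element of $\mathcal F_{[c,b]}^{(0)}$ has the form $a = ub^0 = u$ with $0 < u < b$, so $a$ is represented by the single base $b$ digit $u$. Consequently, directly from~(\ref{eq:definition}), $\Scb(a) = c + a^2$, and the fixed point condition $\Scb(a) = a$ becomes simply $c + a^2 = a$, i.e.
\[
c = a - a^2 = a(1-a).
\]

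Now I would simply analyze this equation over the integers $a$ with $1 \le a \le b-1$. Since $a \ge 1$, the factor $1 - a$ is at most $0$, so $a(1-a) \le 0$. Therefore, if $c > 0$ there is no valid $a$, which gives that $\mathcal F_{[c,b]}^{(0)}$ is empty. If instead $c = 0$, then $a(1-a) = 0$ forces $a \in \{0, 1\}$, and only $a = 1$ satisfies $0 < a < b$ (using $b \ge 2$); conversely $\Scb[0,b](1) = 0 + 1^2 = 1$, so $1$ is indeed a fixed point. Hence $\mathcal F_{[0,b]}^{(0)} = \{1\}$.

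There is no real obstacle here: the entire content is the observation that a one-digit number $a$ satisfies $\Scb(a) = c + a^2$, after which the claim follows from the sign of $a(1-a)$ for positive integers $a$. The only point requiring a word of care is noting that $b \ge 2$ is what guarantees $1$ lies in the allowed range $0 < u < b$ for the $c = 0$ case.
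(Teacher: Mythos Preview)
Your proof is correct and follows essentially the same approach as the paper: both reduce the one-digit fixed point condition to $c = a - a^2 \le 0$ for $0 < a < b$, and then read off the two cases $c > 0$ and $c = 0$. Your version is slightly more explicit (factoring as $a(1-a)$, verifying $1$ lies in range and is indeed fixed), but the argument is the same.
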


\begin{proof}
Let $0 < a < b$.  Then $a = \Scb(a)$ implies that $c = a - a^2 \leq 0$.  Hence, 
if $c = 0$, we have $a = 1$, and if $c > 0$, we have a contradiction.  
\end{proof}

\begin{lemma}\label{lem:count-ub}
Fix $c \geq 0$ and $b \geq 2$.
The cardinality of $\mathcal F_{[c,b]}^{(1)}$ is
\[ 
\left|\mathcal F_{[c,b]}^{(1)}\right|= \begin{cases}
2 & \text{if $\alpha^2 - \alpha b + c = 0$ for some integer  $1 \leq \alpha < \frac{1}{2}b$,}\\
1 & \text{if  $b^2 = 4c$, and}\\
0 & \text{otherwise.}
\end{cases}
\]
\end{lemma}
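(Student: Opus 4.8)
The plan is to directly analyze when an element of the form $a = ub$ with $0 < u < b$ can be a fixed point of $\Scb$. Writing such an $a$ in standard base $b$ notation, its digits are $u$ (in the $b^1$ place) and $0$ (in the units place), so the defining equation $\Scb(a) = a$ becomes $c + u^2 = ub$, i.e.
\[
u^2 - ub + c = 0.
\tag{$\ast$}
\]
Thus $|\mathcal F_{[c,b]}^{(1)}|$ is exactly the number of integers $u$ with $0 < u < b$ satisfying $(\ast)$. The whole lemma reduces to counting roots of this quadratic that lie in the open interval $(0,b)$.

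Next I would apply the quadratic formula: the roots of $(\ast)$ are $u = \tfrac{1}{2}\bigl(b \pm \sqrt{b^2 - 4c}\,\bigr)$. Observe that the two roots $u_+$ and $u_-$ are symmetric about $b/2$ and satisfy $u_+ + u_- = b$ and $u_+ u_- = c$. I would handle three cases according to the discriminant $b^2 - 4c$. If $b^2 - 4c < 0$ there are no real roots, so the count is $0$. If $b^2 = 4c$ there is a single (double) root $u = b/2$; one checks $0 < b/2 < b$ always holds (for $b \geq 2$), and $u = b/2$ is an integer precisely when $b$ is even — but if $b$ is odd and $b^2 = 4c$ then $4 \mid b^2$ forces $2 \mid b$, a contradiction, so $b^2 = 4c$ already forces $b$ even and hence $u = b/2 \in \ZZ^+$; this gives exactly one fixed point, matching the middle case. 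If $b^2 - 4c > 0$, the two distinct real roots $u_\pm$ are symmetric about $b/2$; since their sum is $b$, an integer, one root is an integer iff the other is, and by the symmetry $u_- = b - u_+$ both lie in $(0,b)$ iff one does. So either both roots are integers in $(0,b)$ (count $2$) or neither is usable (count $0$). To match the stated first case, set $\alpha = u_-$, the smaller root, which satisfies $1 \le \alpha < \tfrac{1}{2}b$ and $\alpha^2 - \alpha b + c = 0$; conversely any such integer $\alpha$ yields the root pair $\{\alpha, b-\alpha\}$, giving two fixed points.

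The only mildly delicate points are bookkeeping ones: verifying that in the double-root case $b^2 = 4c$ the root $b/2$ is automatically a positive integer strictly less than $b$ (so this case is genuinely "$1$" and not secretly "$0$"), and confirming that $\alpha = 0$ and $\alpha = b$ cannot occur for the relevant $c$ (indeed $\alpha = 0$ forces $c = 0$ and then the other root is $b$, which is excluded since we need $u < b$; one should check the problem statement's convention handles $c=0$ correctly here — when $c = 0$, $(\ast)$ gives $u(u-b) = 0$, whose only root in $(0,b)$ is none, consistent with $\alpha^2 - \alpha b + 0 = 0$ having no solution with $1 \le \alpha < b/2$, so the count is $0$). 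I do not expect any real obstacle; the argument is a routine case analysis on the discriminant of $(\ast)$, and the symmetry $u_+ + u_- = b$ is what makes the "$2$ or $0$" dichotomy clean.
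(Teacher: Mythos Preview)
Your proof is correct and follows essentially the same approach as the paper: reduce membership in $\mathcal F_{[c,b]}^{(1)}$ to the quadratic $u^2 - ub + c = 0$ and count its integer roots in $(0,b)$ via the symmetry $u \leftrightarrow b-u$. The only minor difference is that the paper invokes its earlier reflection lemma (Lemma~\ref{l:reflections}) to obtain this pairing, whereas you derive it directly from Vieta's relation $u_+ + u_- = b$; your version is slightly more self-contained but otherwise identical in substance.
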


\begin{proof}
From the definition of $\mathcal F_{[c,b]}^{(1)}$,
given $a = ub$ with $0 < u < b$, we have
$a\in \mathcal F_{[c,b]}^{(1)}$ if and only if $\Scb(a) = a$ or, equivalently,
$c + u^2 = ub$.  Thus, $\left|\mathcal F_{[c,b]}^{(1)}\right| \neq 0$ if and only if there exists some integer $u$, $0 < u < b$, such that 
\begin{equation}~\label{e:Fcb1}
u^2 - ub + c = 0.
\end{equation}
Since this is a quadratic equation, there are at most two such values of $u$, and at most one if $b^2 = 4c$. 

By Lemma~\ref{l:reflections}, $a = ub$ is a fixed point of $\Scb$ if and only if $\tilde{a} = (b-u)b$ is a fixed point of $\Scb$.  Hence
$a = ub \in \mathcal F_{[c,b]}^{(1)}$ if and only if $(b-u)b \in \mathcal F_{[c,b]}^{(1)}$.
Thus, $\left|\mathcal F_{[c,b]}^{(1)}\right| = 2$ if and only if there are two integer solutions to equation~(\ref{e:Fcb1}) with $0 < u < b$, in which case one of the solutions will satisfy $1 \leq u < \frac{1}{2}b$.  The lemma follows.
\end{proof}

\begin{lemma}\label{lem:count-ubn}
For $c \geq 0$, $b \geq 2$, and $n\geq 2$, the cardinality of $\mathcal F_{[c,b]}^{(n)}$ is
\[ 
\left|\mathcal F_{[c,b]}^{(n)}\right|= \begin{cases}
1 & \text{if $b^{2n} - 4c$ is a nonzero perfect square, and}\\
0 & \text{otherwise.}\\
\end{cases}
\]
\end{lemma}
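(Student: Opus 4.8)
The plan is to mimic the structure of the proof of Lemma~\ref{lem:count-ub}, but to exploit the fact that for $n \geq 2$ the reflection map of Lemma~\ref{l:reflections} no longer produces a \emph{new} fixed point, so the count can be at most $1$. First I would translate the fixed-point condition: writing $a = ub^n$ with $0 < u < b$, the element $a$ lies in $\mathcal F_{[c,b]}^{(n)}$ precisely when $\Scb(a) = a$, i.e. $c + u^2 = ub^n$, which rearranges to
\begin{equation*}
u^2 - ub^n + c = 0.
\end{equation*}
By the quadratic formula, the integer solutions are $u = \tfrac{1}{2}\bigl(b^n \pm \sqrt{b^{2n} - 4c}\,\bigr)$, so a necessary condition for a solution is that $b^{2n} - 4c$ be a (nonnegative) perfect square; say $b^{2n} - 4c = d^2$ with $d \geq 0$. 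Note $d$ and $b^n$ have the same parity (both are determined mod $2$ by $4c \equiv 0$), so $u = \tfrac12(b^n \pm d)$ is automatically an integer whenever $d^2 = b^{2n}-4c \geq 0$.

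The crux is then to show that exactly one of the two candidate roots $u_\pm = \tfrac12(b^n \pm d)$ satisfies the constraint $0 < u < b$, provided $d \neq 0$. For the larger root $u_+ = \tfrac12(b^n + d)$: since $b^n \geq b^2 \geq 2b > 2b - d$ is false in general, I instead observe $u_+ \geq \tfrac12 b^n \geq \tfrac12 b^2 \geq b$ (using $b \geq 2$), so $u_+ \geq b$ and hence $u_+$ is \emph{not} admissible. For the smaller root $u_- = \tfrac12(b^n - d)$: we have $u_- \cdot u_+ = c \geq 0$ and $u_- + u_+ = b^n$, so $u_- = c/u_+$ when $u_+ \neq 0$; since $u_+ > 0$ and $d < b^n$ (because $c \geq 0$ forces $d^2 \le b^{2n}$, with equality only if $c=0$, but then $d = b^n$ is excluded by the hypothesis $d \neq 0$ unless... — here I must handle $c = 0$ carefully), we get $u_- > 0$. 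And $u_- \leq \tfrac12 b^n$; to get $u_- < b$ I would argue $u_- = c/u_+ < c/(\tfrac12 b^n) = 2c/b^n \le 2c/b^2$; this is $< b$ iff $2c < b^3$, i.e. $c < b^3/2$, which is not assumed — so this estimate is too weak and I need the sharper bound. The correct argument: $u_-$ is a root of $u^2 - ub^n + c$, so $u_-(b^n - u_-) = c \le b^{2n}/4$... Actually the clean route is: if $u_- \geq b$ then also $u_+ \geq u_- \geq b$, and then $c = u_-(b^n-u_-)$ with both $u_-,u_+ \in [b, b^n]$; but we also need $u_+ < b^n$ for this to be consistent with $u_- > 0$... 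I would instead directly check $u_- < b$: $u_- < b \iff b^n - d < 2b \iff b^n - 2b < d \iff (b^n-2b)^2 < d^2 = b^{2n} - 4c$ (valid since $b^n - 2b \geq 0$ for $b\geq 2, n\geq 2$) $\iff b^{2n} - 4b^{n+1} + 4b^2 < b^{2n} - 4c \iff c < b^{n+1} - b^2$, and since $n \ge 2$ gives $b^{n+1} - b^2 \geq b^3 - b^2 > 0$, while $c = u_-u_+ \le \tfrac14 b^{2n}$... this still needs $c$ small.

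The honest situation is that $u_- < b$ need not hold for all $c$; rather, \emph{the definition of $\mathcal F_{[c,b]}^{(n)}$ restricts to $u < b$}, so the cardinality is $1$ exactly when at least one root lies in $(0,b)$ and $b^{2n}-4c$ is a nonzero square, and $0$ otherwise — and the lemma as stated asserts this equals $1$ \emph{whenever} $b^{2n}-4c$ is a nonzero perfect square. So the real content is: \textbf{if} $b^{2n} - 4c = d^2 > 0$ \textbf{then} $u_- \in (0,b)$ automatically. I would prove $u_- > 0$ from $u_-u_+ = c \ge 0$ and $u_+ > 0$ (with the $c = 0$ edge case giving $u_- = 0$, which must then be excluded — so perhaps $c>0$ is implicitly in force, or $u_-=0$ corresponds to $a = 0 \notin \ZZ^+$), and prove $u_- < b$ by the observation that $u_-^2 = u_- b^n - c < u_- b^n$, hence $u_- < b^n$ — wait, that gives $<b^n$, not $<b$. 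The resolution must be that any fixed point of the form $ub^n$ with $n\geq 2$ forces small $c$: indeed $a = ub^n = c + u^2 < c + b^2$, and $a \geq b^n \geq b^2$, so $c > b^n - b^2 \geq 0$; combined with $c = u(b^n - u)$ one deduces $u < b$ or $u > b^n - b$, and the reflection/pairing from Lemma~\ref{l:reflections} applied to the "top two digits" interpretation shows these two ranges correspond to reflected fixed points \emph{at a different scale}, not both in $\mathcal F^{(n)}$. The main obstacle, then, is pinning down precisely why only one root survives; I expect the author resolves it by the inequality $c < b^n - b^2 + \text{something}$ derived from $0 < u < b$ directly, making the "$1$" case equivalent to "$b^{2n}-4c$ a nonzero square" only after checking that a square in the admissible range forces $u_- < b$. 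I would write out that chain of equivalences carefully as the core of the proof.
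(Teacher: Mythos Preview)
Your route via the quadratic $u^{2} - b^{n} u + c = 0$ is exactly the paper's. For the forward direction the paper argues as you do; for the converse it asserts that $u_{+} = \tfrac12\bigl(b^{n} + \sqrt{b^{2n}-4c}\bigr) > b$ and then, setting $u_{-} = \tfrac12\bigl(b^{n} - \sqrt{b^{2n}-4c}\bigr)$, writes only that ``it is easily verified that $a = u_{-} b^{n} \in \mathcal F_{[c,b]}^{(n)}$.'' So the step you cannot close --- that $0 < u_{-} < b$ follows from $b^{2n}-4c$ being a nonzero perfect square --- is precisely the step the paper waves through.

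Your suspicion that this step fails is correct: the lemma as stated is false. Take $b = 3$, $n = 2$, $c = 18$. Then $b^{2n} - 4c = 81 - 72 = 9$ is a nonzero perfect square, so the lemma predicts cardinality~$1$; but the two roots are $(9\pm 3)/2 \in \{3,6\}$, neither lying in $(0,3)$, and direct inspection of $a = 9$ and $a = 18$ confirms $\mathcal F_{[18,3]}^{(2)} = \emptyset$. The case $c = 0$ already fails for every $b,n$, since $b^{2n}$ is a nonzero perfect square yet $u_{-} = 0$. The equivalence you worked out, $u_{-} < b \Longleftrightarrow c < b^{n+1} - b^{2}$, together with $u_{-} > 0 \Longleftrightarrow c > 0$, is exactly the missing hypothesis; with it the argument goes through and yields a corrected statement. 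You were not missing a trick --- the paper's ``easily verified'' conceals a genuine error.
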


\begin{proof}
Fix $n\geq 2$, and suppose that $a = ub^n$ is a fixed point of $\Scb$ for some $0<u<b$. Then $ub^n = a = \Scb(a) = c + u^2$, which implies that
\begin{equation}\label{eq:quad}
u = \frac{b^n \pm \sqrt{b^{2n}-4c}}{2}.
\end{equation}
Since $u\in \ZZ^+$, $b^{2n} - 4c$ is a perfect square, and since $u < b$ and $n \geq 2$, 
$b^{2n} - 4c$ is nonzero.  Conversely, if $b^{2n} - 4c$ is a nonzero perfect square, then
$\frac{b^n + \sqrt{b^{2n}-4c}}{2} > b$ and so is not a candidate for $u$, while,
letting $u = \frac{b^n - \sqrt{b^{2n}-4c}}{2}$,
it is easily verified that $a = ub^n \in \mathcal F_{[c,b]}^{(n)}$.
\end{proof}

The following theorem and its proof were inspired by the work of Hargreaves and Siksek \cite{hargreaves} on the number of fixed points of (unaugmented) generalized happy functions. As is standard, we let $r_2(n)$ denote the number of representations of $n\in \ZZ^+$ as the sum of two squares; that is, 
\begin{equation}\label{e:r_2}
r_2(n) =   \left|\{(x,y)\in\mathbb{Z}^2 \mid x^2 + y^2 = n\}\right|.
\end{equation}

\begin{theorem}\label{t:count-2digit}
For $c > 0$ and $b\geq 2$, the number of two-digit fixed points of $\Scb$ is given by
\[
\begin{cases}
\frac{1}{2}r_2(b^2-4c+1) + \left|\mathcal F_{[c,b]}^{(1)}\right| & \text{ if $b$ is odd, and}\\[0.75em]
\frac{1}{4}r_2(b^2-4c+1) + \left|\mathcal F_{[c,b]}^{(1)}\right| & \text{ if $b$ is even.}
\end{cases}
\]
\end{theorem}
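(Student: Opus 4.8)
The plan is to count two-digit fixed points $a = a_1 b + a_0$ with $1 \le a_1 \le b-1$ and $0 \le a_0 \le b-1$ directly from the fixed-point equation, separating out the cases with $a_0 = 0$ (which are exactly the elements of $\mathcal F_{[c,b]}^{(1)}$, contributing the $|\mathcal F_{[c,b]}^{(1)}|$ term) from those with $a_0 \ge 1$, and then reinterpreting the latter count via $r_2$. First I would write the condition $\Scb(a) = a$ as $c + a_1^2 + a_0^2 = a_1 b + a_0$, and complete the square in the obvious way: multiplying by $4$ gives $(2a_1 - b)^2 + (2a_0 - 1)^2 = b^2 - 4c + 1$. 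So every two-digit fixed point corresponds to a lattice-point representation $x^2 + y^2 = b^2 - 4c + 1$ with $x = 2a_1 - b$ and $y = 2a_0 - 1$, and conversely.

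Next I would pin down exactly which representations $(x,y)$ arise. The constraint $1 \le a_1 \le b-1$ translates to $2 - b \le x \le b - 2$, i.e. $|x| \le b-2$; the constraint $0 \le a_0 \le b-1$ translates to $-1 \le y \le 2b-3$. I would argue that in fact $|x| \le b-2$ is automatic from the equation once one also wants $y$ in range, or handle the boundary cases $|x| = b-1$ (forcing $a_1 = 0$ or $a_1 = b$, excluded) and $y \le -3$ or $y \ge 2b-1$ carefully — the point being that among the four sign-combinations $(\pm x, \pm y)$ of a primitive-looking representation, the admissibility conditions select a controlled subset. The clean way: $a_1$ ranges over a full interval symmetric about $b/2$ and $a_0$ over $\{0, \dots, b-1\}$, so $x$ ranges over \emph{all} integers of the correct parity with $|x| \le b-2$, while $y$ is \emph{odd} and I must check $y \in \{-1, 1, 3, \dots, 2b-3\}$; since $x^2 + y^2 = b^2 - 4c + 1 \le b^2$ forces $|y| \le b$, and $y$ odd with $|y| \le b$ means $y \in \{-(b\text{ or }b-1), \dots\}$, I would show the only admissible sign choice is essentially "$y \ge -1$", i.e. exactly one of $\pm y$ works for each representation with $y \ne 0$ (and $y = 0$ is impossible since $y$ is odd). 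This is the step I expect to be the main obstacle: carefully verifying that the range restrictions on $a_0$ (and on $a_1$) are equivalent to picking out precisely one representative from each $\pm$-orbit, with no off-by-one losses at the endpoints $a_0 = 0$, $a_0 = b-1$, $a_1 = 1$, $a_1 = b-1$.

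Having established a bijection between two-digit fixed points and a set of representations $(x,y)$ of $n := b^2 - 4c+1$ as a sum of two squares, I would count that set using the symmetries of the representation set. The full set $\{(x,y) : x^2 + y^2 = n\}$ has order-$4$ (or order-$8$) symmetry; but since $n = b^2 - 4c + 1$ is the sum of an odd square and an even square in any representation coming from a fixed point (as $y$ is odd and $x \equiv b \pmod 2$), I separate by the parity of $b$. When $b$ is odd, $x$ is odd and $y$ is odd, so $n \equiv 2 \pmod 4$ and every representation has both coordinates odd; the map $(x,y) \mapsto (-x, y)$ is a free involution pairing admissible with inadmissible $x$-sign... actually here I pair via $(x,y)\mapsto(\pm x,\pm y)$ and the diagonal swap, getting that the number of admissible pairs is $\tfrac12 r_2(n)$ after accounting that the swap $(x,y)\mapsto(y,x)$ also changes parities and so does not fix anything. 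When $b$ is even, $x$ is even and $y$ is odd, so $x \ne \pm y$ always and $(x,y) \mapsto (y,x)$ sends representations to representations with the opposite parities; restricting to those with $x$ even, $y$ odd uses up a factor of $2$, and then the admissibility (one sign of $y$) uses another factor of $2$, yielding $\tfrac14 r_2(n)$. I would write this out as a short counting argument on the group $(\ZZ/2)^2$ or $D_4$ acting on representations, checking that no representation is fixed by the relevant symmetry (impossible since that would need $x = 0$ or $y = 0$ or $x = \pm y$, each excluded by the parity of $y$). Adding back the $a_0 = 0$ fixed points gives the stated $|\mathcal F_{[c,b]}^{(1)}|$ term and completes the proof.
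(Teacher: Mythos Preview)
Your approach is essentially the paper's: complete the square to get $(2a_1-b)^2+(2a_0-1)^2=b^2-4c+1$, set up a correspondence with lattice representations of $n=b^2-4c+1$, and count using symmetries, splitting off the $a_0=0$ case as $|\mathcal F_{[c,b]}^{(1)}|$.

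However, your handling of the admissible region and the symmetry count is muddled, and one statement is actually wrong. You write that $(x,y)\mapsto(-x,y)$ ``pairs admissible with inadmissible $x$-sign'', but in fact \emph{both} signs of $x$ are admissible: the constraint is $|x|\le b-2$, which is symmetric, and it is satisfied automatically (from $x^2\le n-1\le b^2-4$ together with the parity of $y$ ruling out $|x|=b-1$). So there is no halving from the $x$-sign. Likewise, in the odd-$b$ case both $x$ and $y$ are odd, so the swap $(x,y)\mapsto(y,x)$ does \emph{not} ``change parities''; that remark belongs only to the even-$b$ case. The correct picture is: the admissible set is exactly $X=\{(x,y):x^2+y^2=n,\ y\ge 1,\ y\text{ odd}\}$ (with no further constraint on $x$), and then $|X|=\tfrac12 r_2(n)$ when $b$ is odd (every $(x,y)\in Z$ has $y$ odd, and exactly one of $\pm y$ is positive) and $|X|=\tfrac14 r_2(n)$ when $b$ is even (half of $Z$ has $y$ odd, via the swap, and among those half have $y>0$). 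The paper makes this clean by writing down explicit mutually inverse maps $\phi(u,v)=(2u-b,2v-1)$ and $\psi(x,y)=((x+b)/2,(y+1)/2)$ and checking directly that $\psi$ lands in $U$, which dissolves your ``main obstacle'' about range restrictions in two lines.
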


\begin{proof}
Note that $a = ub + v$ is a fixed point of $\Scb$, with $0 < u < b$ and $0\leq v < b$ if and only if $ub + v = \Scb(ub + v) = c + u^2 + v^2$.   Define
\[
U = \{(u,v) \in \mathbb{Z}^2 \mid 0 < u,v < b \textnormal{~and~} ub + v = c + u^2 + v^2\}.
\]
By the correspondence $(u,v) \leftrightarrow ub + v$, $|U|$ is equal to the number of two-digit fixed points of $\Scb$ that are not multiples of $b$.  Hence, the number of two-digit fixed points of $\Scb$ is equal to $|U| +|\mathcal F_{[c,b]}^{(1)}|$.

Set 
\[
X = \{(x,y)\in \ZZ^2 \mid y \geq1 \text{ odd, and } x^2 + y^2 = b^2 - 4c + 1  \}.
\]
To see that $|U| = |X|$, consider the functions
$\phi: U \to X$ and $\psi: X \to U$ defined by 
\[
\phi(u,v)=(2u - b,2v - 1) \quad\text{ and }\quad \psi(x,y)=\left(\frac{x+b}{2},\frac{y+1}{2}\right).    
\]
A straightforward calculation, and noting that $2 v - 1 > 0$ and odd, shows that the image of $\phi$ is contained in $X$.  Let $(x,y) \in X$,to see that $\psi(x,y)\in U$, first note that $y$ is odd and $x \equiv b \pmod 2$, and so $\psi(x,y) \in \ZZ^2$.
Next, since  $x^2 < x^2 + y^2 = b^2 - 4c + 1 < b^2$, we have $-b < x < b$, implying that $0 < x + b < 2b$, and thus $0 < (x+b)/2 < b$, as desired. Similarly, $1 \leq y < b$, so $1 \leq (y+1)/2 < b$. Finally, a direct calculation verifies that the needed equation is satisfied.  

Since, as is easily checked, $\phi$ and $\psi$ are inverses, it follows that $|U| = |X|$. 

Now, note that $X$ is a subset of
\[
Z = \{(x,y)\in\mathbb{Z}^2 \mid x^2+y^2=b^2-4c+1\},
\]
and recall that, by equation~(\ref{e:r_2}), $|Z| = r_2(b^2-4c+1)$.

If $b$ is odd and $(x,y) \in Z$, then $b^2-4c+1 \equiv 2 \pmod 4$, and so $y$ must be odd. 
Thus $\varphi_{odd}:Z\rightarrow X$ defined by $(x,y)\mapsto (x,|y|)$ is a 2-to-1 surjective
function.  Hence $|X| = \frac{1}{2} |Z| = \frac{1}{2}r_2(b^2-4c+1)$.

If $b$ is even and $(x,y) \in Z$, then $b^2-4c+1$ is odd, and so exactly one of $x$ and $y$ is odd.  Thus 
$\varphi_{even}:Z\rightarrow X$ defined by 
\[(x,y)\mapsto \left\{
	\begin{array}{ll}
		(x,|y|)  & \mbox{if } y \mbox{ is odd},\\
		(y,|x|)  & \mbox{if } x \mbox{ is odd},
	\end{array}
\right.\]
is a 4-to-1 surjective
function.  Hence $|X| = \frac{1}{4} |Z| = \frac{1}{4}r_2(b^2-4c+1)$.

Recalling that the number of two-digit fixed points of $\Scb$ is $|U| + |\mathcal F_{[c,b]}^{(1)}| = |X| + |\mathcal F_{[c,b]}^{(1)}|$, the result follows. 
\end{proof}

\begin{corollary}\label{t:counting}
For $b\geq 2$ and $0 < c < 3b - 3$, the number of fixed points of $\Scb$ is exactly
\[
\begin{cases}
\frac{1}{2}r_2(b^2-4c+1) + \left|\mathcal F_{[c,b]}^{(1)}\right| & \text{ if $b$ is odd, and}\\[0.75em]
\frac{1}{4}r_2(b^2-4c+1) + \left|\mathcal F_{[c,b]}^{(1)}\right| & \text{ if $b$ is even.}
\end{cases}
\]
\end{corollary}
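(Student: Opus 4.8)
The plan is to show that, under the hypothesis $0 < c < 3b - 3$, every fixed point of $\Scb$ has at most two base-$b$ digits, so that the count of all fixed points coincides with the count of two-digit fixed points supplied by Theorem~\ref{t:count-2digit}. The key inequality is the following: if $a = \sum_{i=0}^{n} a_i b^i$ with $a_n \neq 0$ is a fixed point, then $a = \Scb(a) = c + \sum_{i=0}^n a_i^2$. On one hand $a \geq a_n b^n \geq b^n$; on the other hand $\sum_{i=0}^n a_i^2 \leq (n+1)(b-1)^2$, so $b^n \leq c + (n+1)(b-1)^2$. First I would observe that for $n \geq 2$ this forces $c \geq b^n - (n+1)(b-1)^2$, and then check that the right-hand side is at least $3b - 3 = 3(b-1)$ for all $b \geq 2$ and all $n \geq 2$, contradicting $c < 3b-3$. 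Concretely, for $n = 2$ one needs $b^2 - 3(b-1)^2 \geq 3(b-1)$, i.e. (after rearranging) a quadratic inequality in $b$ that holds for $b$ in a bounded range; the small residual cases ($b$ large, where $b^2 < 3(b-1)^2$) must be handled separately by a sharper bound — see the next paragraph.

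The subtlety is that $b^n - (n+1)(b-1)^2$ is \emph{negative} for large $b$ when $n = 2$ (since $3(b-1)^2 > b^2$ once $b \geq 3$), so the crude digit bound alone does not rule out three- or more-digit fixed points for large $b$. To repair this I would use the leading digit more carefully: if $a$ has $n+1$ digits then $a \geq b^n$, but also the top digit contributes only $a_n^2 \leq (b-1)a_n \le (b-1)a_n$, while $a \geq a_n b^n$, so $a_n b^n \le c + a_n^2 + \sum_{i=0}^{n-1} a_i^2 \le c + a_n(b-1) + n(b-1)^2$. Dividing by $a_n$ and using $a_n \geq 1$ gives $b^n \le c + (b-1) + n(b-1)^2/a_n \le c + (b-1) + n(b-1)^2$; for $n \ge 2$ this still reads $b^2 \le c + (b-1) + 2(b-1)^2 = c + 2b^2 - 3b + 1$, which is automatically true, so this refinement is not enough either. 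The honest fix is to bound $a$ below by $b^n$ but bound $\sum a_i^2$ above by $n(b-1)^2 + (b-1)^2 = (n+1)(b-1)^2$ and simultaneously note that a fixed point satisfies $a \le c + (n+1)(b-1)^2 < 3(b-1) + (n+1)(b-1)^2$; comparing with $a \ge b^n$ we need $b^n < 3(b-1) + (n+1)(b-1)^2$ to \emph{fail} for $n \ge 3$, and for $n \ge 3$ indeed $b^n \ge b^3 \ge b \cdot b^2 > (n+1)(b-1)^2 + 3(b-1)$ can be verified by induction on $n$ with base case $n = 3$ checked directly. This disposes of all fixed points with four or more digits cleanly; the genuinely delicate case is exactly $n = 2$ (three-digit fixed points), which is where I expect the main work to lie.

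For the three-digit case I would argue directly rather than through the crude bound. Suppose $a = a_2 b^2 + a_1 b + a_0$ is a fixed point with $a_2 \geq 1$. Then $a_2 b^2 \le a = c + a_2^2 + a_1^2 + a_0^2 \le c + a_2^2 + 2(b-1)^2 < 3(b-1) + a_2^2 + 2(b-1)^2$. If $a_2 = 1$ this gives $b^2 < 3(b-1) + 1 + 2(b-1)^2 = 2b^2 - b$, i.e. $b^2 < 2b^2 - b$, which holds for all $b \ge 2$ — so this still does not immediately contradict. The resolution is to also use the lower bound $a \ge a_2 b^2$ together with the \emph{exact} equation and the constraint that $a_1, a_0 \le b-1$ forces $c + a_2^2 + a_1^2 + a_0^2 \ge a_2 b^2$, hence $a_1^2 + a_0^2 \ge a_2 b^2 - a_2^2 - c > a_2 b^2 - a_2^2 - 3(b-1)$; for $a_2 = 1$ this says $a_1^2 + a_0^2 > b^2 - 1 - 3(b-1) = b^2 - 3b + 2 = (b-1)(b-2)$, which is compatible with $a_1^2 + a_0^2 \le 2(b-1)^2$. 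Since even this is not an outright contradiction, the correct approach — and the one I would commit to — is: combine $b^2 \le a_2 b^2 \le a = c + a_2^2 + a_1^2 + a_0^2$ with $a = a_2 b^2 + a_1 b + a_0 \ge b^2 + 0 + 0$, rearrange to $a_1 b + a_0 = c + a_1^2 + a_0^2 - (a_2 b^2 - a_2^2)$, and observe the right side is at most $c + 2(b-1)^2 - (b^2 - 1) = c + (b-1)^2 - (b^2 - 1) + (b-1)^2$… — in short, the arithmetic here must be pushed to a genuine contradiction using $c < 3b-3$ precisely, and I anticipate that the threshold $3b - 3$ in the hypothesis is exactly calibrated so that $n = 2$ is impossible; verifying that calibration (that $c \ge 3b - 3$ is the smallest value of $c$ admitting a three-digit fixed point, e.g. the point $(b-1)^2 b^2 + \cdots$ or more likely $1\cdot b^2$ type examples) is the main obstacle. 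Once three-or-more-digit fixed points are excluded, and since Lemma~\ref{l:onedigit} together with the inclusion of $\mathcal F^{(1)}_{[c,b]}$ already accounts for one-digit fixed points ($c > 0$ kills them) and the multiples of $b$ among two-digit fixed points, Theorem~\ref{t:count-2digit} gives the stated count verbatim, completing the proof.
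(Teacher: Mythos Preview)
Your plan correctly identifies the structure of the argument --- rule out fixed points with three or more digits, then invoke Theorem~\ref{t:count-2digit} and Lemma~\ref{l:onedigit} --- and your induction for $n\ge 3$ is fine. But the three-digit case ($n=2$) is a genuine gap: you try several crude upper bounds on $\sum a_i^2$ and each time correctly observe that no contradiction results, then trail off with ``the arithmetic here must be pushed to a genuine contradiction.'' The reason your bounds fail is that estimating $\sum a_i^2 \le (n+1)(b-1)^2$ throws away the information in the \emph{leading} digit, and for $n=2$ that loss is fatal: indeed $b^2 \le c + 3(b-1)^2$ holds for all $b\ge 3$ even when $c=0$, so no inequality of that shape can work.

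The missing idea is to solve the fixed-point equation for $c$ rather than for $a$. If $a = a_2 b^2 + a_1 b + a_0$ is a three-digit fixed point, then
\[
c \;=\; a - (a_2^2+a_1^2+a_0^2) \;=\; a_2(b^2-a_2) + a_1(b-a_1) + a_0(1-a_0),
\]
and each summand can be minimized independently over the allowed digit ranges: the minimum of $a_2(b^2-a_2)$ for $1\le a_2\le b-1$ is $b^2-1$ (at $a_2=1$); the minimum of $a_1(b-a_1)$ is $0$ (at $a_1=0$); and the minimum of $a_0(1-a_0)$ is $-(b-1)(b-2)$ (at $a_0=b-1$). Summing gives $c \ge (b^2-1) - (b-1)(b-2) = 3b-3$, which is exactly the contradiction you were looking for and confirms your intuition that the threshold is sharp. (The same termwise minimization handles all $n\ge 2$ at once and is essentially the content of Theorem~\ref{t:maxmin}.) The paper's own proof is even shorter: it simply cites~\cite[Lemma~2.2]{augment}, which asserts that $\Scb(a)<a$ for every $a>b^2$ whenever $c<3b-3$, and then appeals to Lemma~\ref{l:onedigit} and Theorem~\ref{t:count-2digit}.
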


\begin{proof}
By~\cite[Lemma 2.2]{augment}, since $c < 3b - 3$, for each $a > b^2$, $S(a) < a$ and, therefore, $a$ is not a fixed point.  Hence each fixed point of $\Scb$ has at most two digits.  The corollary now follows directly from
Lemma~\ref{l:onedigit} and Theorem~\ref{t:count-2digit}.
\end{proof}

\section{Fixed Point Deserts} \label{S:ArbitraryLengthDesert}

In this section, we fix the base $b\geq 2$ and consider consecutive values of $c$ for which $\Scb$ has no fixed points. 
Note that for a fixed $b$, if
$a = \sum_{i=0}^{n}a_ib^i$ is a fixed point of $\Scb$, with $0\leq a_i < b$, for each $i$, then, solving for $c$, we have
\begin{equation}
 c = \sum_{i=0}^n a_i(b^i - a_i). \label{eqn:cfixedpoint} 
\end{equation}

\begin{definition} For $b\geq 2$ and $k \in \ZZ^+$, an \emph{$k$-desert base $b$} is a set of $k$ consecutive non-negative integers $c$ for each of which $\Scb$ has no fixed points. A \emph{desert base $b$} is an $k$-desert base $b$ for some $k \geq 1$.  \end{definition}

For example, for $28 \leq c \leq 35$, $S_{[c,10]}$ has no fixed points and, therefore, there is an 8-desert base 10 starting at $c=28$.  

We begin by determining bounds on the values of $c$ such that $\Scb$ has a fixed point of a given number of digits.  
For $b\geq 2$ and $n\geq 2$ define
\begin{align*}
m_{b,n} & = b^n - b^2 + 3b  - 3, \text{~and~} \\
M_{b,n} & = b^{n+1}-b^2 - (n-1)(b-1)^2 + (b-\floor*{b/2})\floor*{b/2}.
\end{align*}

\begin{theorem}\label{t:maxmin}
Let $b\geq 2$ and $n \geq 2$.  If $\Scb$ has a $n + 1$-digit fixed point, then $ m_{b,n} \leq c \leq M_{b,n}$.  Further, these bounds are sharp.
\end{theorem}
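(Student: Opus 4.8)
The plan is to find, for each target digit-count $n+1$, which fixed point of $\Scb$ (equivalently, which digit string $a_n a_{n-1}\cdots a_1 a_0$ with $a_n\neq 0$) minimizes and which maximizes the associated value $c = \sum_{i=0}^n a_i(b^i - a_i)$ given in~\eqref{eqn:cfixedpoint}, and then show those extreme configurations actually do arise from genuine fixed points. The key observation is that the formula for $c$ decouples across digit positions: the contribution of position $i$ is $f_i(a_i) = a_i(b^i - a_i)$, a downward parabola in $a_i$ with $f_i(0) = 0$. So I would analyze each $f_i$ separately, subject to the constraints $0 \le a_i \le b-1$ for $i < n$ and $1 \le a_n \le b-1$.

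For the lower bound $m_{b,n}$: to minimize $c$ I want each $f_i(a_i)$ as small as possible. For $0 \le i \le n-1$ the minimum of $f_i$ over $\{0,\dots,b-1\}$ is $0$, attained at $a_i = 0$. For $i = n$, since $a_n \ge 1$, the parabola $f_n(a_n) = a_n b^n - a_n^2$ is increasing on $[1, b-1]$ for $n\ge 1$ (its vertex is at $b^n/2 \ge b/2 > b-1$... actually $b^n/2 \ge b-1$ needs checking but holds for $n\ge2$), wait — I should be careful: $f_n$ is increasing on the relevant range, so it is minimized at $a_n = 1$, giving $f_n(1) = b^n - 1$. That yields $c = b^n - 1$, but the claimed bound is $m_{b,n} = b^n - b^2 + 3b - 3$, which is smaller. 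The resolution is that $a_n=1,a_i=0$ need not be a fixed point, so I must instead minimize $c$ over digit strings that \emph{are} fixed points. I would argue: if $a$ is an $(n+1)$-digit fixed point then $a \ge b^n$, so $c = \Scb(a) - (a - \sum a_i^2)$... more directly, a fixed point $a$ of $\Scb$ with $a\ge b^n$ must satisfy $\Scb(a) = a \ge b^n$; combined with $\Scb(a) = c + \sum a_i^2 \le c + (n+1)(b-1)^2$ this gives a lower bound on $c$. Tuning the bookkeeping — and exhibiting the string realizing equality — should produce exactly $m_{b,n}$; I expect the extremal fixed point to be something like $1\,0\cdots0\,(b-1)(b-1)$ or a nearby small perturbation, and $b^n - b^2 + 3b - 3$ is consistent with two low-order digits equal to $b-1$ (contributing $2(b-1)\cdot 1 - 2(b-1)^2$-type corrections). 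I would pin this down by testing the candidate $a = b^n + (b-1)b + (b-1)$ against the defining equation.

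For the upper bound $M_{b,n}$: to maximize $c$ I maximize each $f_i$ independently. For $i=0$, $f_0(a_0) = a_0 - a_0^2 \le 0$, maximized at $a_0 = 0$. For $1 \le i \le n-1$, the vertex of $f_i$ is at $b^i/2$, which is $\ge b-1$ once $i\ge 2$ (and a short check handles $i=1$), so $f_i$ is maximized at $a_i = b-1$, giving $f_i(b-1) = (b-1)(b^i - b + 1)$; the exception is $i=1$, where the vertex is at $b/2$, so the max of $f_1$ over integers is $f_1(\lfloor b/2\rfloor) = (b - \lfloor b/2\rfloor)\lfloor b/2\rfloor$ — which is precisely the term appearing in $M_{b,n}$. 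For $i=n$, the vertex $b^n/2$ exceeds $b-1$, so again $a_n = b-1$ is optimal, contributing $(b-1)(b^n - b+1)$. Summing: $\sum_{i=2}^{n}(b-1)(b^i-b+1) + (b-\lfloor b/2\rfloor)\lfloor b/2\rfloor$, and one checks $\sum_{i=2}^n (b-1)(b^i - b + 1) = b^{n+1} - b^2 - (n-1)(b-1)^2$ by geometric series, recovering $M_{b,n}$ exactly. Then I must verify the maximizing string — $a_n = a_{n-1} = \cdots = a_2 = b-1$, $a_1 = \lfloor b/2\rfloor$, $a_0 = 0$ — is actually a fixed point of $S_{[M_{b,n},b]}$; this is immediate from~\eqref{eqn:cfixedpoint} since $c = M_{b,n}$ was defined as that sum, so the equation $a = \Scb(a)$ holds by construction. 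That simultaneously proves sharpness of the upper bound.

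The main obstacle is the lower bound and its sharpness: unlike the upper bound, the naive digit-by-digit minimization gives a string that is \emph{not} a fixed point for the resulting $c$, so the real argument must run the other direction — assume an $(n+1)$-digit fixed point exists, extract the inequality $c \ge b^n - (\text{digit-square slack})$ from $a = \Scb(a) \ge b^n$, and then show the slack is at most $b^2 - 3b + 3 = (b-1)(b-2)+1$ by bounding how large $\sum a_i^2 - \sum a_i(b^i-a_i)$-type expressions... more precisely, one shows directly that any fixed point $a$ with exactly $n+1$ digits has $c = \sum a_i(b^i - a_i) \ge b^n - b^2 + 3b - 3$ by isolating the contribution $a_n(b^n - a_n) \ge b^n - 1$ (since $1\le a_n\le b-1$ forces $a_n(b^n-a_n)\ge 1\cdot(b^n-1)$, with the only way to get close to the minimum being $a_n=1$ or $a_n=b-1$) and then showing the remaining digits cannot drag the total below $m_{b,n}$ — the worst case being when $a_1, a_0$ are near $b-1$. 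I would handle the sharpness by exhibiting the explicit fixed point realizing $c = m_{b,n}$ (expected to be $a = b^n + (b-1)b + (b-1) = b^n + b^2 - 1$, for which $\Scb(a) = c + 1 + (b-1)^2 + (b-1)^2$ should equal $a$ precisely when $c = m_{b,n}$), and I would double-check the small cases $b = 2$ and $n = 2$ separately since the floor function and the $i=1$ versus $i=n$ coincidence when $n=2$ can cause the general formulas to degenerate.
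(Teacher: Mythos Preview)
Your upper-bound argument is correct and matches the paper's. The lower-bound argument, however, contains a genuine error that sends you down an unnecessary detour.

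The mistake is in minimizing $f_0(a_0) = a_0(1 - a_0)$. You claim the minimum over $\{0,\dots,b-1\}$ is $0$ at $a_0 = 0$, but in fact $f_0(a_0) < 0$ for every $a_0 \ge 2$; the minimum is $(b-1)(2-b) = -(b-1)(b-2)$, attained at $a_0 = b-1$. (You even note $f_0 \le 0$ correctly in your upper-bound paragraph.) With this correction, the digit-by-digit minimization gives $a_n = 1$, $a_{n-1} = \cdots = a_1 = 0$, $a_0 = b-1$, and
\[
c = (b^n - 1) + (b-1)(2-b) = b^n - b^2 + 3b - 3 = m_{b,n},
\]
exactly as claimed.

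The second, related misconception is your statement that ``$a_n = 1$, $a_i = 0$ need not be a fixed point.'' In fact \emph{every} digit string is a fixed point of $\Scb$ for the value of $c$ computed from~\eqref{eqn:cfixedpoint}: the equation $a = \Scb(a)$ is equivalent to $c = \sum a_i(b^i - a_i)$. You invoke precisely this equivalence to establish sharpness of the upper bound, and it applies identically here. So the extremal string $a = b^n + (b-1)$ is automatically a fixed point of $S_{[m_{b,n},b]}$, and sharpness of the lower bound follows for free --- no side argument about ``digit-square slack'' or the candidate $b^n + b^2 - 1$ (which is not the minimizer) is needed. Once the $f_0$ computation is fixed, the lower bound is exactly parallel to the upper bound, and this is how the paper handles it.
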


\begin{proof}
Let $b\geq 2$ and $n \geq 2$ be fixed. By equation~(\ref{eqn:cfixedpoint}), each fixed point $a$ of $\Scb$ determines the value
of $c$.  Treating the $a_i$ in equation~(\ref{eqn:cfixedpoint}) as independent
variables taking on integer values between 0 and $b-1$, inclusive, we find the minimal possible value of $c$ by minimizing each term.
Observe that $a_0(b^0 - a_0)$ is minimal when $a_0 = b - 1$;
for $0 < i < n$, $a_i(b^i - a_i)$ is minimal when $a_i = 0$; and, since $a_n \neq 0$, 
$a_n(b^n-a_n)$ is minimal when $a_n = 1$.  Hence the minimal value of $c$ is determined by 
\[a = \sum_{i=0}^{n}a_i^\prime b^i, \text{ where } 
a_i^\prime = \begin{cases} 1, & \text{for }\ i=n; \\
0, & \text{for }\ 1 \leq i \leq n-1; \\
b-1, & \text{for }\ i=0,
\end{cases}\]
and so, the minimal value of $c$ is
\[c = (b-1)(b^0 - (b-1)) + 1 \cdot (b^n-1) = b^n - b^2 + 3b  - 3 = m_{b,n}.\]

Similarly, maximizing the terms of equation~(\ref{eqn:cfixedpoint}), we find that 
$a_0(b^0 - a_0)$ is maximal when $a_0 = 0$;
$a_1(b^1 - a_1)$ is maximal when $a_1 = \floor*{b/2}$; and
for $1 < i \leq n$, $a_i(b^i - a_i)$ is maximal when $a_i = b - 1$. 
Hence, the maximal value of $c$ is determined by 
\[a = \sum_{i=0}^{n}a_i^{\prime\prime}b^i, \text{ where } 
a_i^{\prime\prime} = \begin{cases} b-1, & \text{for }\ 2 \leq i \leq n; \\
\floor*{\frac{b}{2}}, & \text{for }\ i=1; \\
0, & \text{for }\ i=0,
\end{cases}\]
and, therefore, the maximal value of $c$ is
\begin{align*}
c &= \floor*{b/2}(b^1 - \floor*{b/2}) +
\sum_{i=2}^n (b^i-(b-1))(b-1) \\
&= b^{n+1}-b^2 - (n-1)(b-1)^2 + (b-\floor*{b/2})\floor*{b/2} = M_{b,n}. \qedhere
\end{align*}
\end{proof}

The following lemma is used to prove Theorem~\ref{thm:desert}, which states that for each $b \geq 2$ there exist arbitrarily long deserts base $b$. 

\begin{lemma}\label{lem:desert}
Let $b\geq 2$ and $n \geq 2$. Then, between the numbers $M_{b,n}$ and $m_{b,n+1}$, there exists an $k$-desert base $b$, where 
\[k = m_{b,n+1} - M_{b,n} - 1 > (n-5/4)(b-1)^2.\]
\end{lemma}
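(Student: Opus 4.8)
The plan is to show that no integer $c$ strictly between $M_{b,n}$ and $m_{b,n+1}$ can be the value of $c$ associated with any fixed point of $\Scb$, and then to estimate the length of the resulting gap. By Theorem~\ref{t:maxmin}, if $\Scb$ has a fixed point with exactly $j+1$ digits (for $j \geq 2$), then $m_{b,j} \leq c \leq M_{b,j}$. The key monotonicity facts needed are that the intervals $[m_{b,j}, M_{b,j}]$ lie in increasing order as $j$ increases, and in particular that $M_{b,n} < m_{b,n+1}$, so that the open interval $(M_{b,n}, m_{b,n+1})$ is disjoint from every $[m_{b,j}, M_{b,j}]$ with $j \geq 2$. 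One must also separately dispatch the finitely many small cases: fixed points with one digit (handled by Lemma~\ref{l:onedigit}, giving only $c = 0$ or nothing) and with two digits (by Theorem~\ref{t:count-2digit} combined with the bound $m_{b,2} = 3b-3$ appearing in Corollary~\ref{t:counting}, such $c$ satisfy $c < 3b - 3 = m_{b,2} \leq M_{b,2}$, so they too are below the interval in question). Since every positive integer $c$ with a fixed point falls into one of these ranges, and none of these ranges meets $(M_{b,n}, m_{b,n+1})$, every integer $c$ with $M_{b,n} < c < m_{b,n+1}$ gives a fixed-point-free $\Scb$.

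Concretely, first I would establish the two inequalities $m_{b,j} \leq M_{b,j}$ for each $j \geq 2$ and $M_{b,j} < m_{b,j+1}$ for each $j \geq 2$; the latter is the crucial one and will follow from a direct comparison of the closed-form expressions, using that $M_{b,j}$ is dominated by its $b^{j+1}$ term while $m_{b,j+1} = b^{j+1} - b^2 + 3b - 3$, so the difference $m_{b,j+1} - M_{b,j}$ is $(j-1)(b-1)^2 + (b^2 - 3b + 3) - (b - \floor{b/2})\floor{b/2}$, which one checks is positive for all $b \geq 2$ and $j \geq 2$ (the worst case being $j = 2$). Second, I would note the one- and two-digit exclusions as above. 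Combining these, the set of integers $c$ in the open interval $(M_{b,n}, m_{b,n+1})$ consists entirely of values for which $\Scb$ has no fixed point, and there are exactly $k = m_{b,n+1} - M_{b,n} - 1$ of them, forming a $k$-desert base $b$ by definition.

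Finally, I would verify the stated lower bound on $k$. Writing out
\[
k = m_{b,n+1} - M_{b,n} - 1 = \bigl(b^{n+1} - b^2 + 3b - 3\bigr) - \bigl(b^{n+1} - b^2 - (n-1)(b-1)^2 + (b - \floor{b/2})\floor{b/2}\bigr) - 1,
\]
the $b^{n+1}$ and $b^2$ terms cancel, leaving
\[
k = (n-1)(b-1)^2 + (3b - 4) - (b - \floor{b/2})\floor{b/2}.
\]
It then remains to show $(3b - 4) - (b - \floor{b/2})\floor{b/2} > -\tfrac14(b-1)^2$, equivalently $k > (n-1)(b-1)^2 - \tfrac14(b-1)^2 = (n - 5/4)(b-1)^2$. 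Since $(b - \floor{b/2})\floor{b/2} \leq b^2/4$ (AM–GM, with equality only when $b$ is even), we get $(3b-4) - (b-\floor{b/2})\floor{b/2} \geq 3b - 4 - b^2/4$, and one checks $3b - 4 - b^2/4 > -(b-1)^2/4$, i.e. $12b - 16 - b^2 > -(b-1)^2 = -b^2 + 2b - 1$, i.e. $10b > 15$, which holds for all $b \geq 2$. This gives the desired strict inequality.

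The main obstacle is purely the bookkeeping of the closed-form comparisons — in particular making sure the inequality $M_{b,n} < m_{b,n+1}$ holds uniformly, including the edge cases $b = 2$ and $n = 2$ where the floor function and the small constant terms matter most; once that is in hand, the desert structure and the size estimate follow by the algebra sketched above. No deeper idea is required beyond the sharp bounds already proved in Theorem~\ref{t:maxmin} and the classification of small fixed points.
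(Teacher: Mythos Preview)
Your overall strategy and the algebra for the lower bound on $k$ coincide with the paper's: both reduce to showing $3b-4-(b-\floor{b/2})\floor{b/2}>-\tfrac14(b-1)^2$ via $(b-\floor{b/2})\floor{b/2}\le b^2/4$, and both invoke the monotonicity of $m_{b,j}$ and $M_{b,j}$ in $j$ (the paper phrases this simply as ``$M_{b,x}$ increases as $x$ increases'' rather than checking $M_{b,j}<m_{b,j+1}$ for every $j$). So structurally you are doing exactly what the paper does, only with the small-digit cases spelled out rather than left implicit.

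There is, however, a genuine error in your handling of the two-digit case. You assert that a two-digit fixed point forces $c<3b-3$, citing Corollary~\ref{t:counting}; but that corollary runs in the opposite direction (it says that \emph{if} $c<3b-3$ then all fixed points have at most two digits). The claim itself is false: for $b=12$ the number $a=6\cdot 12=72$ is a fixed point of $S_{[36,12]}$, and $36>3\cdot 12-3=33$. The correct bound comes straight from equation~(\ref{eqn:cfixedpoint}) with $n=1$: a two-digit fixed point gives $c=a_1(b-a_1)+a_0(1-a_0)\le (b-\floor{b/2})\floor{b/2}$, and this last quantity is visibly a summand of $M_{b,2}$, hence $\le M_{b,2}\le M_{b,n}$. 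With that one-line fix your argument goes through; the rest is correct.
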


\begin{proof}
Let $b \geq 2$ and $n\geq 2$ be fixed.  Note that 
\begin{align*}
m_{b,n+1} - M_{b,n} - 1 &=(b^{n+1} - b^2 + 3b  - 3)  - \\
&\hphantom{mmm}\left(b^{n+1}-b^2 - (n-1)(b-1)^2 + (b-\floor*{b/2})\floor*{b/2}\right) - 1\\
& \geq 3b-3 + (n-1)(b-1)^2 - b^2/4 -1 \\
& = (n-1)(b-1)^2 - b^2/4+b/2-1/4 + 5b/2 - 15/4\\
& > (n-1)(b-1)^2 - (b-1)^2/4\\
& = (n-5/4)(b-1)^2,
\end{align*}
since $b \geq 2$.
Thus, 
\begin{equation}\label{m>M}
m_{b,n+1} >  M_{b,n} + 1.
\end{equation}
Recall that $M_{b,n}$ is an upper bound on values of $c$ such that $\Scb$ has a $(n + 1)$-digit fixed point.  Since $M_{b,x}$ increases as $x$ increases, $M_{b,n}$ is an upper bound on values of $c$ such that $\Scb$ has a fixed point with less than or equal to $(n + 1)$-digits.  Similarly, $m_{b,n+1}$ is a lower bound on values of $c$ such that $\Scb$ has a fixed point with greater than or equal to $(n + 2)$-digits.  

Thus, by equation~(\ref{m>M}), there is no value of $c$  between $M_{b,n}$ and $m_{b,n+1}$ such that $\Scb$ has a fixed point of any size.  Hence there exists an $k$-desert between these two numbers, where $k = m_{b,n+1} - M_{b,n} - 1$.
\end{proof}

\begin{theorem} \label{thm:desert} For each $b \geq 2$ and
$k \in \ZZ^+$, there exists an $k$-desert base $b$. 
\end{theorem}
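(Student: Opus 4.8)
The plan is to derive the theorem as a direct consequence of Lemma~\ref{lem:desert}, which already produces a desert of guaranteed length between $M_{b,n}$ and $m_{b,n+1}$. First I would observe that Lemma~\ref{lem:desert} gives, for each $n \geq 2$, an $\ell$-desert base $b$ with $\ell > (n - 5/4)(b-1)^2$. Since $(b-1)^2 \geq 1$ for all $b \geq 2$, the quantity $(n - 5/4)(b-1)^2$ tends to infinity as $n \to \infty$; hence for any target $k \in \ZZ^+$, one can choose $n$ large enough (for instance any $n \geq k + 2$ works, since then $(n - 5/4)(b-1)^2 \geq (k + 3/4)(b-1)^2 \geq k$) so that the desert furnished by the lemma has length at least $k$.

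The only remaining point is that an $\ell$-desert with $\ell \geq k$ contains a $k$-desert: any $k$ consecutive integers drawn from $\ell$ consecutive integers for which $\Scb$ has no fixed point themselves form a set of $k$ consecutive integers with no fixed point, so restricting to the first $k$ elements of the $\ell$-desert yields a $k$-desert base $b$. This completes the argument.

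I do not expect any genuine obstacle here; the substantive work — bounding where fixed points of a given digit-length can occur and showing the gap between consecutive admissible ranges grows without bound — has already been carried out in Theorem~\ref{t:maxmin} and Lemma~\ref{lem:desert}. The only care needed is the elementary verification that the lower bound $(n - 5/4)(b-1)^2$ on the desert length is eventually at least $k$, which amounts to choosing $n$ appropriately and using $b \geq 2$; I would state this choice explicitly and note that the claim then follows immediately.
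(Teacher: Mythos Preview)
Your proposal is correct and follows essentially the same approach as the paper's proof: invoke Lemma~\ref{lem:desert}, note that $(n-5/4)(b-1)^2$ grows without bound in $n$, and choose $n$ large enough to beat $k$. You are simply a bit more explicit than the paper in specifying a concrete $n$ and in spelling out that an $\ell$-desert with $\ell \geq k$ contains a $k$-desert.
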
 

\begin{proof}
Fix $b \geq 2$ and $k \in \ZZ^+$.  Since $(n-5/4)(b-1)^2$ is an increasing linear function of $n$, there exists some $n \geq 2$ such that $(n-5/4)(b-1)^2 \geq k$.  It follows from 
Lemma~\ref{lem:desert} that there exists an $k$-desert base $b$.
\end{proof}

\end{document}